\def\N{\mathbb N}
\theoremstyle{plain}
\newtheorem{theorem}{Theorem}
\newtheorem{lemma}{Lemma}
\theoremstyle{definition}
\begin{document}

\title{Equable Triangles on the Eisenstein Lattice}

\author{Christian Aebi and Grant Cairns}

\address{Coll\`ege Calvin, Geneva, Switzerland 1211}
\email{christian.aebi@edu.ge.ch}
\address{Department of Mathematical and Physical Sciences, La Trobe University, Melbourne, Australia 3086}
\email{G.Cairns@latrobe.edu.au}


\maketitle

\section{Introduction} 
This paper concerns triangles whose vertices lie on the \emph{Eisenstein lattice}, which is the lattice in the complex plane generated by the elements $1$ and $\omega=-\frac12+i\frac{\sqrt3}2$.
We investigate triangles that are \emph{equable}, that is, they have equal perimeter and area. Our study is inspired by the classification of equable Heron triangles, which have integer sides and area and are known to be realisable on the integer lattice \cite{Yiu}. There are precisely five triangles of the preceding type, up to Euclidean motions; see  \cite{Fo} and \cite{Br}. By comparison, on the Eisenstein lattice, we find the following result.

\begin{theorem} There are only two equable triangles having vertices on the Eisenstein lattice, up to Euclidean motions. They are realized by the following vertices.
\begin{enumerate}
\item[\rm (a)] $A=8+4\omega,\ B= 4+8\omega,\ C=0$,

\item[\rm (b)] $A=6+3\omega,\ B= 8+16\omega,\ C=0$.
\end{enumerate}
\end{theorem}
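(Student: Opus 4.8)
The plan is to reduce the problem to a finite Diophantine search by first pinning down the possible side lengths. I begin by recording the two basic invariants on the Eisenstein lattice. For lattice points differing by $a+b\omega$ with $a,b\in\mathbb{Z}$, the squared distance is $|a+b\omega|^2=a^2-ab+b^2$, a Loeschian number; and for a triangle with edge vectors $a_1+b_1\omega$ and $a_2+b_2\omega$ the area equals $\frac{\sqrt3}{4}|a_1b_2-a_2b_1|$, so it is always of the form $\frac{\sqrt3}{4}D$ with $D\in\mathbb{N}$. Thus the perimeter is a sum $\sqrt{n_1}+\sqrt{n_2}+\sqrt{n_3}$ of square roots of Loeschian numbers, while the area is a rational multiple of $\sqrt3$.

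The crucial first step is to show that equability forces every side to be an integer multiple of $\sqrt3$. Writing $n_i=m_i^2 d_i$ with $d_i$ squarefree, the equation $\sqrt{n_1}+\sqrt{n_2}+\sqrt{n_3}=\frac{D}{4}\sqrt3$ expresses a rational multiple of $\sqrt3$ as a positive rational combination of the $\sqrt{d_i}$. By the linear independence of $\{1\}\cup\{\sqrt d: d>1\ \text{squarefree}\}$ over $\mathbb{Q}$, every $d_i$ must equal $3$; hence $n_i=3k_i^2$ and the $i$-th side is $k_i\sqrt3$ for some $k_i\in\mathbb{N}$. I expect this surd-independence argument to be the conceptual heart of the proof.

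With the sides now equal to $k_1\sqrt3,k_2\sqrt3,k_3\sqrt3$, I substitute into Heron's formula. Setting $K=k_1+k_2+k_3$ and introducing $u=-k_1+k_2+k_3$, $v=k_1-k_2+k_3$, $w=k_1+k_2-k_3$ (all positive integers by the triangle inequality, and all of the same parity since $u+v=2k_3$, etc.), the equable condition perimeter $=$ area becomes, after squaring and cancelling $K$, the single equation
\[
3uvw = 16(u+v+w).
\]
Since the right-hand side is even while $uvw$ is odd when $u,v,w$ are all odd, the common parity must be even; writing $u=2u'$, $v=2v'$, $w=2w'$ reduces this to $3u'v'w'=4(u'+v'+w')$. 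Ordering $u'\le v'\le w'$ gives $u'v'\le 4$, so a short finite check (rewriting, e.g., the $u'=1$ case as $(3v'-4)(3w'-4)=28$) yields exactly the two solutions $(u',v',w')=(2,2,2)$ and $(1,2,6)$. These correspond to $(k_1,k_2,k_3)=(4,4,4)$ and $(3,7,8)$, i.e.\ to an equilateral triangle of side $4\sqrt3$ and a triangle with sides $3\sqrt3,7\sqrt3,8\sqrt3$.

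Finally, it remains to confirm that both candidate triangles actually occur on the lattice, and that nothing else does. A direct check of the vertices in (a) and (b) shows their side lengths are $4\sqrt3$ in all three cases, respectively $3\sqrt3,7\sqrt3,8\sqrt3$, so both congruence classes are realised. Conversely, since a triangle is determined up to Euclidean motion by its multiset of side lengths, each admissible triple yields at most one class, so the two examples exhaust all equable lattice triangles. The one point that still needs care here is realisability: the argument above only constrains the side lengths, and it is the explicit lattice embeddings in (a) and (b)---rather than any abstract existence principle---that guarantee both classes are non-empty.
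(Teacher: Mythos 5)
Your proof is correct and follows essentially the same route as the paper: a surd-independence argument forcing every side to be an integer multiple of $\sqrt3$, then Heron's formula with the substitution $u,v,w$, the same-parity observation reducing to $3u'v'w'=4(u'+v'+w')$, and a finite search yielding exactly $(2,2,2)$ and $(1,2,6)$, i.e.\ sides $4\sqrt3,4\sqrt3,4\sqrt3$ and $3\sqrt3,7\sqrt3,8\sqrt3$. The only cosmetic differences are that you invoke the $\mathbb{Q}$-linear independence of square roots of distinct squarefree integers directly (the paper instead multiplies by $\sqrt3$ and cites the fact that a rational sum of square roots of integers forces each square root to be rational), you organize the finite check via $u'v'\le4$ and the factorization $(3v'-4)(3w'-4)=28$ rather than the paper's bounds $x\le2$, $y\le3$, and you verify the realisability and equability of the explicit vertices in (a) and (b), a check the paper leaves implicit.
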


Figure \ref{F} shows the two triangles, with the first (equilateral) triangle translated by $5$ to the right.

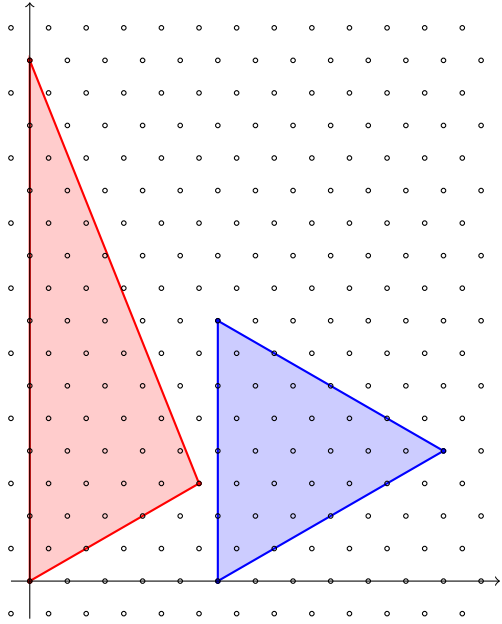
\begin{figure}
\begin{tikzpicture}[scale=.5]
\def\r{1.732};

\draw [blue,fill] (5,0) circle (.06);
\draw [blue,fill] (5+6, 2*\r) circle (.06);
\draw [blue,fill] (5,4*\r) circle (.06);

\draw [red,fill] (0,0) circle (.06);
\draw [red,fill] (9/2, 3*\r/2) circle (.06);
\draw [red,fill] (0,8*\r) circle (.06);

\draw [fill, red!20] (0,0) 
  -- (9/2, 3*\r/2)  
  -- (0,8*\r) 
  -- cycle;

\draw [thick, red] (0,0) 
  -- (9/2, 3*\r/2)  
  -- (0,8*\r) 
  -- cycle;

\draw [fill, blue!20] (5,0) 
  -- (5+6, 2*\r) 
  -- (5,4*\r)
  -- cycle;

\draw [thick, blue] (5,0) 
  -- (5+6, 2*\r) 
  -- (5,4*\r)
  -- cycle;

\foreach \i in {0,...,12}
\foreach \j in {0,...,8}
\draw (\i,\j*\r) circle (.06);
\foreach \i in {0,...,12}
\foreach \j in {-1,...,8}
\draw (\i-1/2,\r*\j+\r/2) circle (.06);

 \draw [->] (0,-1) -- (0,15.4);
 \draw [->] (-1/2,0) -- (12.5,0);

  \end{tikzpicture}
\caption{Two equable triangles  on the Eisenstein lattice}\label{F}
\end{figure}

\section{Proof of the Theorem} 

Consider an equable triangle $T$ with vertices $A=a_1+a_2\omega, B=b_1+b_2\omega$ and $C=0$, where $a_1,a_2,b_1,b_2\in\N\cup\{0\}$. Let $a,b,c$ denote the lengths of the sides $AC,BC,AB$, respectively.
Notice that the squares of the side lengths are integers; for example, $a^2=a_1^2-a_1a_2+a_2^2$. Moreover, the signed area of $T$ is $\frac{\sqrt3}4(a_1b_2-a_2b_1)$.

\begin{lemma}\label{L:bcint}
The side lengths $a,b,c$ are each of the form $\sqrt3n$, for some $n\in \N$.\end{lemma}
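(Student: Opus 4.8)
The plan is to bring in the \emph{equable} hypothesis, which has not yet been used in the setup. Equating perimeter and area gives
\[
a+b+c=\frac{\sqrt3}{4}\,|a_1b_2-a_2b_1|,
\]
so the perimeter $a+b+c$ is a rational multiple of $\sqrt3$. Alongside this I would record the arithmetic nature of the sides: as noted before the lemma, $a^2,b^2,c^2$ are positive integers, so each side has the shape $a=t_a\sqrt{s_a}$, where $s_a$ is the squarefree part of $a^2$ and $t_a\in\N$, and likewise $b=t_b\sqrt{s_b}$, $c=t_c\sqrt{s_c}$.

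The crux is a classical linear-independence fact: as $s$ ranges over the squarefree positive integers, the real numbers $\sqrt s$ are linearly independent over $\mathbb Q$. Rewriting the equable identity as
\[
t_a\sqrt{s_a}+t_b\sqrt{s_b}+t_c\sqrt{s_c}=\frac{|a_1b_2-a_2b_1|}{4}\,\sqrt3
\]
and collecting terms according to their squarefree radicand, this independence forces every radicand other than $3$ to occur with total coefficient $0$. Since $t_a,t_b,t_c$ are strictly positive, no side can have squarefree part different from $3$; in particular none can be an integer (squarefree part $1$, i.e.\ a rational multiple of $\sqrt1=1$). Hence $s_a=s_b=s_c=3$, giving $a=\sqrt3\,t_a$, $b=\sqrt3\,t_b$, $c=\sqrt3\,t_c$ with $t_a,t_b,t_c\in\N$, which is exactly the claim.

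The only nontrivial ingredient is the $\mathbb Q$-linear independence of the $\sqrt s$, which I would simply cite (it is equivalent to $[\mathbb Q(\sqrt{p_1},\dots,\sqrt{p_k}):\mathbb Q]=2^k$ for distinct primes $p_i$). The one point to handle with care is the bookkeeping when two or three sides share the same squarefree part: but because all of $t_a,t_b,t_c$ are positive, merging such terms only enlarges the offending coefficient, so it still cannot vanish, and the conclusion is unaffected.
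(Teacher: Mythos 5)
Your proof is correct and takes essentially the same route as the paper: both exploit equability to write the perimeter as a rational multiple of $\sqrt3$ and then appeal to the classical independence of square roots (the paper cites the equivalent fact that a rational sum $\sum\sqrt{m_i}$ of integer square roots forces each $\sqrt{m_i}$ to be rational). The only difference is packaging: the paper multiplies the identity by $\sqrt3$, concludes each side has the form $n/\sqrt3$, and then uses integrality of $a^2$ to upgrade to $\sqrt3\,m$, whereas your squarefree decomposition with positive coefficients $t_a,t_b,t_c$ reaches $s_a=s_b=s_c=3$ in a single step.
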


\begin{proof}
From above, as $T$ is equable, $a+b+c=\frac{\sqrt3}4(a_1b_2-a_2b_1)$, so
 \[
 \sqrt{3a^2}+\sqrt{3b^2}+\sqrt{3c^2}=\sqrt3(a+b+c)=\frac{3}4(a_1b_2-a_2b_1),
 \]
 which is rational. But it is well known that if $\sum_{i=1}^n\sqrt{m_i}$ is rational for integers $m_1,\dots,m_n$, then $\sqrt{m_i}$ is rational for each $i$; see for example \cite{art} or \cite{Yuan}. So $\sqrt{3a^2},\sqrt{3b^2},\sqrt{3c^2}$ are each rational. Hence, as $3a^2,3b^2,3c^2$ are integers, it follows that $\sqrt{3a^2},\sqrt{3b^2},\sqrt{3c^2}$ are also integers. So the side lengths $a,b,c$ are each of the form $\frac{n}{\sqrt3}$, for some $n\in \N$. Thus, since the squares of the side lengths are integers, the required result follows.
 \end{proof}

We follow the reasoning used in the proof of the  equable Heron triangle theorem given in the Appendix in \cite{AC1}.
For equable triangles with sides $a,b,c$, Heron's formula gives
\begin{equation}\label{E}
(a+b+c)(-a+b+c)(a-b+c)(a+b-c)=16(a+b+c)^2.
\end{equation}
Let $u=\frac{-a+b+c}{\sqrt3}, v=\frac{a-b+c}{\sqrt3},w=\frac{a+b-c}{\sqrt3}$, so that $u,v,w\in \N$ by the above lemma, and $a=\frac{\sqrt3(v+w)}{2},b=\frac{\sqrt3(u+w)}{2},c=\frac{\sqrt3(u+v)}{2}$.
Then Equation \eqref{E} gives 
\begin{equation}\label{E:1}
3uvw=16(u+v+w),
\end{equation}
and we may assume without loss of generality that $u\le v\le w$. 
 Note that by construction, $u,v,w$ have the same parity, and  from \eqref{E:1}, $u,v,w$ are necessarily even. Let $u=2x,v=2y,w=2z$, so $a=\sqrt3(y+z),b=\sqrt3(x+z),c=\sqrt3(x+y)$.
 Then
$3xyz=4(x+y+z)$.
Thus
\[
y\le z=\frac{4(x+y)}{3xy-4},
\]
so
$3xy^2-8y-4x\le 0$.
Hence 
\[
x\le y\le \frac{4+\sqrt{16+12x^2}}{3x}\]
so
$3x^2\le 4+\sqrt{16+12x^2}$.
Hence
$(3x^2-4)^2\le (16+12x^2)$.
Thus
$9x^4-36x^2\le 0$,
which gives $x\le 2$. Then 
\[
y\le \frac{4+\sqrt{16+12x^2}}{3x} \le \frac{4+\sqrt{16+12}}{3},
\]
 since the function $\frac{4+\sqrt{16+4x^2}}{3x} $ is decreasing for positive $x$. So, as $y$ is an integer,  $y\le 3$. Then, considering the values $x\le 2$, $y\le 3$ and 
$z=\frac{4(x+y)}{3xy-4}$, we find there are only two solutions, which have the following integer values for $x,y,z$:
\[
1,2,6\quad\text{and}\quad
2,2,2,
\]
which imply finally the values for $a,b,c$:
\[
8\sqrt3,7\sqrt3,3\sqrt3\quad\text{and}\quad
4\sqrt3,4\sqrt3,4\sqrt3.
\]




\bibliographystyle{amsplain}
{}

\end{document}